\newtheoremstyle{mio}%
	{}{} 
	{\itshape}{} 
	{\bfseries}{.}{ } 
	{#1 #2\thmnote{~\mdseries(#3)}} 
\theoremstyle{mio}
\newtheorem{teor}{Theorem}[section]
\newtheorem{cor}[teor]{Corollary}
\newtheorem{prop}[teor]{Proposition}
\newtheorem{lemma}[teor]{Lemma}
\theoremstyle{definition}
\newtheorem{oss}[teor]{Remark}
\newcommand{\Br}{\mathrm{Br}}
\newcommand{\ins}[1]{\mathbb{#1}}
\newcommand{\insN}{\ins{N}}
\newcommand{\insZ}{\ins{Z}}
\newcommand{\inN}{\in\insN}
\newcommand{\inZ}{\in\insZ}
\newcommand{\N}{\mathbb{N}}
\newcommand{\inslim}{\mathcal{L}}
\newcommand{\Int}{\mathrm{Int}}
\newcommand{\bdelta}{\boldsymbol{\delta}}
\title{The polynomial closure is not topological}
\author{Giulio Peruginelli}
\date{\today}
\address{Dipartimento di Matematica, Universit\`a di Padova, Padova, Italy}
\email{gperugin@math.unipd.it}
\author{Dario Spirito}
\address{Dipartimento di Matematica, Universit\`a di Padova, Padova, Italy}
\email{spirito@math.unipd.it}
\curraddr{Dipartimento di Scienze Matematiche, Informatiche e Fisiche, Universit\`a di Udine, Udine, Italy}
\email{dario.spirito@uniud.it}
\subjclass[2010]{13F20; 13F30}
\keywords{Polynomial closure; pseudo-convergent sequences; valuation domains; integer-valued polynomials}
\begin{document}
\begin{abstract}
We characterize the polynomial closure of a pseudo-convergent sequence in a valuation domain $V$ of arbitrary rank, and then we use this result to show that the polynomial closure is never topological when $V$ has rank at least $2$. 
\end{abstract}

\maketitle

\section{Introduction}
Let $D$ be an integral domain with quotient field $K$ and let $S\subseteq K$ be a subset. The ring of \emph{integer-valued polynomial} over $S$ is
\begin{equation*}
\Int(S,D):=\{f\in K[X]\mid f(S)\subseteq D\}.
\end{equation*}
The \emph{polynomial closure} of $S$, denoted by $\overline{S}$, is the largest subset of $K$ for which the equality $\Int(S,D)=\Int(\overline{S},D)$ holds, and a subset $S$ is \emph{polynomially closed} if $S=\overline S$.

Chabert studied in \cite{chabert-polynomialclosure} conditions under which the polynomial closure is topological, i.e., when there is a topology on $K$ whose closure operator is the polynomial closure; he showed that for this to happen $D$ must be a local domain, and $D=V$ a valuation domain of rank $1$ is a sufficient condition. The purpose of this paper is to complement the latter result by showing that, when $V$ is a valuation domain of rank bigger than $1$, the polynomial closure is \emph{never} topological.

We prove this result by means of the subsets that Chabert used for his own. Indeed, Chabert described the polynomial closure of a generic subset $S$ of $V$ by using \emph{pseudo-convergent sequences}, originally introduced by Ostrowski to study extensions of valued fields \cite{ostrowski-pcv-1} and later used by Kaplansky in the study of maximal fields \cite{kaplansky-maxfield} (see below for the definitions), as well as new related classes of \emph{pseudo-divergent} and \emph{pseudo-stationary} sequences which he introduced; more precisely, he showed that the polynomial closure of $S$ can be described by adding all the \emph{pseudo-limits} of the sequences of these kinds contained in $S$ \cite[Theorem 5.2]{chabert-polynomialclosure}; these three types of sequences can also be used to generalize the results of Ostrowski \cite{fundstatz}. In this paper, we completely describe the polynomial closure of a pseudo-convergent sequence for valuation domains of arbitrary rank; this will allow to show that, for some explicitly constructed pseudo-convergent sequence $E:=\{s_n\}_{n\inN}$, we have $\overline{E}\neq\overline{\{s_1\}}\cup\overline{E\setminus\{s_1\}}$, and thus that the polynomial closure is not topological.

\medskip

Throughout the article, we assume that $V$ is a valuation domain with quotient field $K$. We denote by $v$ the valuation associated to $V$ and by $\Gamma_v$ the value group of $V$. We denote by $M$ the maximal ideal of $V$. The \emph{rank} of $V$ is the rank of its value group, which is equal to the Krull dimension of $V$.

Let $E:=\{s_n\}_{n\inN}$ be a sequence of elements of $K$. We say that $E$ is a \emph{pseudo-convergent sequence} if the sequence $\bdelta(E):=\{\delta_n:=v(s_{n+1}-s_n)\}_{n\inN}\subseteq\Gamma_v$ (called the \emph{gauge} of $E$) is strictly increasing. The \emph{breadth ideal} of $E$ is 
\begin{equation*}
\Br(E):=\{x\in K\mid v(x)>\delta_n\text{~for all~}n\inN\};
\end{equation*}
the breadth ideal is always a fractional ideal of $V$. An element $\alpha\in K$ is a \emph{pseudo-limit} of $E$ if $v(\alpha-s_n)=\delta_n$ for all $n\inN$; we denote the set of pseudo-limits of $E$ by $\inslim_E$. If $\inslim_E$ is nonempty, then $\inslim_E=\alpha+\Br(E)$ for any pseudo-limit $\alpha$ (\cite[Lemma 3]{kaplansky-maxfield}). We note that, in general, pseudo-convergent sequences can be indexed by any well-ordered set $\Lambda$ but that for our purposes it suffices to consider only those indexed by $\insN$ (see Remark \ref{oss:indexed}).



\section{The polynomial closure of a pseudo-convergent sequence}

The following lemma shows that, given a pseudo-convergent sequence $E=\{s_n\}_{n\inN}\subset K$, an element $t\in K$ can be close to at most one of the elements of $E$ (with respect to the gauge).

\begin{lemma}\label{not in open ball}
Let $E:=\{s_n\}_{n\inN}\subset K$ be a pseudo-convergent sequence with gauge $\{\delta_n\}_{n\inN}$, and let $t\in K$. Then, $v(s_n-t)\leq\delta_n$ for all but at most one $n\inN$.
\end{lemma}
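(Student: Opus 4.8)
The plan is to argue by contradiction, using the elementary fact that the valuation of a difference of two terms of a pseudo-convergent sequence is controlled by the smallest relevant gauge value. First I would record this auxiliary observation: for any indices $i<j$ one has $v(s_j-s_i)=\delta_i$. This follows by writing
\[
s_j-s_i=\sum_{k=i}^{j-1}(s_{k+1}-s_k),
\]
noting that the valuations of the summands are $\delta_i<\delta_{i+1}<\cdots<\delta_{j-1}$, and observing that since the minimum of these valuations is attained \emph{uniquely} at the first summand, the ultrametric inequality forces equality: $v(s_j-s_i)=\delta_i$.

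Next, suppose toward a contradiction that there are two distinct indices, which we may take to be $m<n$, with $v(s_m-t)>\delta_m$ and $v(s_n-t)>\delta_n$. Since the gauge $\bdelta(E)$ is strictly increasing and $m<n$, we have $\delta_n>\delta_m$, so in fact $v(s_n-t)>\delta_n>\delta_m$ as well. Then from $s_n-s_m=(s_n-t)-(s_m-t)$ and the ultrametric inequality I obtain
\[
v(s_n-s_m)\geq\min\{v(s_n-t),\,v(s_m-t)\}>\delta_m,
\]
contradicting the auxiliary fact that $v(s_n-s_m)=\delta_m$. Hence the set of indices $n$ with $v(s_n-t)>\delta_n$ has at most one element, which is precisely the claim.

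I do not expect any real obstacle here; the only step needing a little care is the auxiliary computation of $v(s_j-s_i)$, specifically ensuring that the minimum among $\delta_i,\ldots,\delta_{j-1}$ is attained \emph{strictly} at a single summand so that the ultrametric inequality upgrades to an equality — and this is exactly what the defining hypothesis that the gauge is strictly increasing provides.
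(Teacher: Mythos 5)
Your proof is correct and rests on the same two ingredients as the paper's: the identity $v(s_j-s_i)=\delta_{\min(i,j)}$ for terms of a pseudo-convergent sequence and the ultrametric inequality applied to $s_n-s_m=(s_n-t)-(s_m-t)$. The paper argues directly (fixing one violating index $n$ and computing $v(s_m-t)$ exactly for every other $m$, which it reuses later as equation (1)), whereas you argue by contradiction from two violating indices, but this is only a cosmetic difference.
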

\begin{proof}
Suppose $v(s_n-t)>\delta_n$, and let $s_m\in E$. If $m<n$, then
\begin{equation*}
v(s_m-t)=v(s_m-s_n+s_n-t)=\delta_m
\end{equation*}
since $v(s_m-s_n)=\delta_m<\delta_n<v(s_n-t)$; on the other hand, if $m>n$ then
\begin{equation*}
v(s_m-t)=v(s_m-s_n+s_n-t)=\delta_n<\delta_m
\end{equation*}
since $v(s_m-s_n)=\delta_n<v(s_n-t)$. The claim is proved.
\end{proof}

\begin{lemma}\label{greatest prime ideal in an ideal}
Let $I\subset M\subset V$ be an ideal. Then, the largest prime ideal contained in $I$ is equal to 
\begin{equation*}
\bigcap_{t\notin I,n\geq1}t^nV
\end{equation*}
\end{lemma}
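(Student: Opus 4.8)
The claim is a purely commutative-algebra statement about a valuation domain $V$ with maximal ideal $M$ and an ideal $I \subsetneq M$; I will prove the two inclusions separately. Write $P$ for the largest prime ideal contained in $I$ (this exists since the set of prime ideals contained in $I$ is a chain, being totally ordered in a valuation domain, and $(0)$ is always in it; its union is again prime and contained in $I$), and write $J := \bigcap_{t\notin I,\, n\geq 1} t^n V$.

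For the inclusion $P \subseteq J$: fix $t \notin I$ and $n \geq 1$. Since $P$ is prime and $t \notin P$ (because $t \notin I \supseteq P$), we have $t^n \notin P$; as $P$ is prime, $P \subseteq t^n V$ — indeed if $x \in P$ and $x \notin t^n V$ then $t^n \in x V \subseteq P$ (valuation domain: $x \nmid t^n$ forces $t^n \mid x$... wait, rather $x, t^n$ are comparable under divisibility, and $x \notin t^n V$ means $v(x) < v(t^n)$, hence $t^n \in xV \subseteq P$, a contradiction). So every $x \in P$ lies in $t^n V$ for every such $t$ and $n$, i.e. $P \subseteq J$.

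For the reverse inclusion $J \subseteq P$: first I will check that $J$ is a prime ideal; then, since $J \subseteq I$ — which follows by taking, for any $x \in J$, a single element $t \notin I$ (such $t$ exists since $I \ne V$; even $t$ a unit works, or more carefully pick $t \in M \setminus I$ if needed, but in fact the containment in $t^1 V$ for a suitable $t$ already... let me instead argue: if $x \notin I$ then taking $t = x$ and $n = 2$ gives $x \in x^2 V$, so $1 \in xV$, so $x$ is a unit, contradicting $x \in J \subseteq M$; hence $x \in I$) — maximality of $P$ will give $J \subseteq P$. To see $J$ is prime: it is clearly an ideal (intersection of ideals). Suppose $ab \in J$ but $a \notin J$ and $b \notin J$. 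Since $V$ is a valuation domain we may assume $v(a) \le v(b)$, so $a \mid b$ and it suffices to derive a contradiction from $a \notin J$, $a^2 \mid ab \in J$. From $a \notin J$ there is some $t \notin I$ and $n$ with $a \notin t^n V$, i.e. $v(a) < n\, v(t)$. But $v(t) > 0$ would be needed; if $t$ is a unit this is automatic-false, so the relevant $t$ lie in $M$, and then $v(a^2) = 2v(a) < 2n\,v(t)$, showing $a^2 \notin t^{2n} V$, so $ab \notin J$ — contradiction. (The case $v(b) \le v(a)$ is symmetric.)

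The step I expect to require the most care is the primality of $J$ and correctly handling the role of units versus elements of $M$ among the $t \notin I$: an element $t \notin I$ can be a unit, in which case $t^n V = V$ contributes nothing to the intersection, so effectively the intersection ranges over $t \in M \setminus I$, and one must make sure $M \setminus I$ is nonempty — which it is, precisely because $I \subsetneq M$. Once this bookkeeping is in place, both the "$J$ is prime" argument and the "$J \subseteq I$" argument go through via the standard fact that in a valuation domain any two elements are comparable under divisibility, and the result follows by combining $P \subseteq J$ with the maximality of $P$ applied to the prime ideal $J \subseteq I$.
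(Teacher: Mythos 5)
Your overall strategy is the same as the paper's: both inclusions are obtained the same way (comparability under divisibility in a valuation domain gives $P\subseteq t^nV$ for every $t\notin I$ and every $n$, and taking $t=x$ for a hypothetical $x$ in the intersection but not in $I$ forces $x$ to be a unit, which is excluded because the intersection sits inside $M$). The one point where you diverge is that the paper simply cites \cite[Theorem 17.1(3)]{gilmer} for the primality of $J=\bigcap_{t\notin I,n\geq1}t^nV$, whereas you prove it by hand --- and that is where there is a genuine error. Having normalized to $v(a)\le v(b)$ with $a,b\notin J$, you take a witness $t\notin I$, $n\ge1$ with $v(a)<n\,v(t)$, deduce $v(a^2)<2n\,v(t)$, i.e.\ $a^2\notin t^{2n}V$, and conclude ``so $ab\notin J$''. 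That last inference requires $v(ab)\le v(a^2)$, i.e.\ $v(b)\le v(a)$, which is the opposite of your normalization: under $v(a)\le v(b)$ the value $v(b)$ can be far larger than $n\,v(t)$, so $ab$ may lie in $t^{2n}V$ even though $a^2$ does not. Concretely, with $\Gamma_v=\Z\oplus\Z$ ordered lexicographically and $I=\{x\mid v(x)\ge(0,5)\}$, take $v(t)=v(a)=(0,1)$, $n=2$, $v(b)=(0,100)$: then $a^2\notin t^4V$ but $ab\in t^4V$, so the chosen witness proves nothing about $ab$.

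The repair is to run the argument on the element of \emph{larger} value: from $b\notin J$ pick a nonunit $s\notin I$ and $m\ge1$ with $v(b)<m\,v(s)$; then $v(ab)=v(a)+v(b)\le 2v(b)<2m\,v(s)$, so $ab\notin s^{2m}V\supseteq J$. (Equivalently, keep your witness for $a$ but normalize to $v(b)\le v(a)$, so that $ab\mid a^2$ and hence $a^2\in abV\subseteq J$, contradicting $a^2\notin t^{2n}V$.) With that one step fixed, the rest of your proof --- existence of $P$ as the union of the chain of primes inside $I$, the bookkeeping that only $t\in M\setminus I$ matter and that $M\setminus I\neq\emptyset$ so $J\subseteq M$, the inclusion $J\subseteq I$ via $t=x$, and finally $J\subseteq P$ by maximality --- is correct and amounts to the paper's argument made self-contained.
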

\begin{proof}
Let $P(I):=\bigcap_{t\notin I,n\geq1}t^nV$. Then, $P(I)$ is a prime ideal by \cite[Theorem 17.1(3)]{gilmer}. If $\alpha\in P(I)\setminus I$, then $\alpha\in \alpha^n V$ for every $n$, which is not possible (unless $\alpha$ is a unit, which we can exclude since $I\subset M$). This shows that $P(I)\subseteq I$. 

Let $Q\subseteq I$ be a prime ideal. If for some $t\notin I$ there exists $n\in\N$ such that $t^n\in Q$ then $t\in Q\subseteq I$, a contradiction. Thus $Q\subseteq P(I)$, and $P(I)$ is the largest prime ideal contained in $I$.
\end{proof}
The previous lemma can also be rephrased by saying that $x\in P(I)$ if and only if $v(x)>nv(t)$ for all $t\in V\setminus I$ and all $n\in\N$.

\begin{prop}\label{prop:Pk}
Let $E:=\{s_n\}_{n\inN}\subset K$ be a pseudo-convergent sequence with gauge $\{\delta_n\}_{n\inN}$; let $c_n:=s_{n+1}-s_n$. Let $\alpha\in K$ and take any $k\inN$; let $P_k$ be the largest prime ideal contained in $c_k^{-1}\Br(E)$. Then the following are equivalent:
\begin{enumerate}[(i)]
    \item\label{prop:Pk:lambda} $v(\alpha-s_k)>\lambda(\delta_r-\delta_k)+\delta_k$ for every $r\geq k$ and every $\lambda\inN$;
    \item\label{prop:Pk:Br} $\alpha\in s_k+c_kP_k$.
\end{enumerate}
\end{prop}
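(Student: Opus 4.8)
The plan is to unwind both conditions into inequalities about the valuation $v$ and then match them quantifier by quantifier; the bridge is the reformulation of Lemma \ref{greatest prime ideal in an ideal} stated right after it, which tells us exactly when an element belongs to $P_k$.

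First I would rewrite (ii). Since $v(c_k)=\delta_k$, the membership $\alpha\in s_k+c_kP_k$ is equivalent to $x:=c_k^{-1}(\alpha-s_k)\in P_k$, where $v(x)=v(\alpha-s_k)-\delta_k$. Next I would make the ideal $I:=c_k^{-1}\Br(E)$ explicit: by the definition of the breadth ideal,
\begin{equation*}
I=\{y\in K\mid v(y)>\delta_n-\delta_k\ \text{for all}\ n\inN\},
\end{equation*}
and evaluating the defining inequality at $n=k$ shows $I\subseteq M$, so Lemma \ref{greatest prime ideal in an ideal} applies. By its reformulation, $x\in P_k$ if and only if $v(x)>\ell\,v(t)$ for every $\ell\inN$ and every $t\in V\setminus I$.

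The core step is to describe the values taken by the elements of $V\setminus I$. On the one hand, if $t\in V\setminus I$ then $v(t)\ge 0$ and $v(t)\le\delta_m-\delta_k$ for some $m\inN$; since $\bdelta(E)$ is strictly increasing, necessarily $m\ge k$. On the other hand, for each $m\ge k$ the element $c_mc_k^{-1}$ lies in $V\setminus I$ (it has value $\delta_m-\delta_k\ge 0$, and the inequality defining $I$ fails at the index $n=m$) and has value exactly $\delta_m-\delta_k$. Hence, fixing $\ell$, the family of inequalities ``$v(x)>\ell\,v(t)$ for all $t\in V\setminus I$'' is equivalent to the family ``$v(x)>\ell(\delta_m-\delta_k)$ for all $m\ge k$'': the elements $c_mc_k^{-1}$ realize the strongest of these inequalities, while every other $t\in V\setminus I$ gives a weaker one. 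Substituting $v(x)=v(\alpha-s_k)-\delta_k$ and relabelling $\ell$ as $\lambda$ and $m$ as $r$ turns this into precisely condition (i), which finishes the proof.

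I expect the only genuinely delicate point to be this last manipulation — checking in both directions that nothing is lost by restricting attention to the distinguished elements $c_mc_k^{-1}$ — together with keeping track of the boundary index $r=k$, for which (i) degenerates to $v(\alpha-s_k)>\delta_k$, i.e.\ $x\in M$, in accordance with $P_k\subseteq M$. Everything else is a routine translation between $\Br(E)$, $P_k$, and the inequalities in (i).
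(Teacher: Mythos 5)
Your proposal is correct and follows essentially the same route as the paper: both reduce to the membership $c_k^{-1}(\alpha-s_k)\in P_k$ and then invoke the reformulation of Lemma \ref{greatest prime ideal in an ideal} for the ideal $c_k^{-1}\Br(E)$. The only difference is cosmetic: where the paper says the translation of ``$v(\beta)>\lambda v(t)$ for all $t\notin c_k^{-1}\Br(E)$'' into ``$v(\beta)>\lambda(\delta_r-\delta_k)$ for all $r\ge k$'' holds ``by definition,'' you spell out the two-sided comparison via the witnesses $c_mc_k^{-1}$, which is a correct and slightly more explicit account of the same step.
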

\begin{proof}
Let $\beta:=\frac{\alpha-s_k}{c_k}$; then, $v(\beta)=v(\alpha-s_k)-\delta_k$, and thus we have to show that $\beta\in P_k$ if and only if $v(\beta)>\lambda(\delta_r-\delta_k)$ for every $\lambda\inN$ and $r\geq k$.

The sequence $F:=c_k^{-1}E=\{c_k^{-1}s_n\}_{n\inN}$ is pseudo-convergent with gauge $\{\delta_n-\delta_k\}_{n\inN}$, and thus $\Br(F)=c_k^{-1}\Br(E)\subsetneq V$. Hence, by Lemma \ref{greatest prime ideal in an ideal}, $\beta\in P_k$ if and only if $\beta\in t^{\lambda}V$ for every $t\in V\setminus\Br(F)$ and every $\lambda\in \N$. By definition, this is equivalent to $v(\beta)>\lambda(\delta_r-\delta_k)$ for every $r,\lambda\in\N$. Hence, the two conditions are equivalent.
\end{proof}

The following lemma is essentially \cite[Proposition 4.8]{chabert-polynomialclosure}; we prove it explicitly to show that it holds without any hypothesis on the rank of $V$.
\begin{lemma}\label{lemma:inslim}
Let $E:=\{s_n\}_{n\inN}$ be a pseudo-convergent sequence. Then, $\inslim_E\subseteq\overline{E}$.
\end{lemma}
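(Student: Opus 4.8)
The plan is to verify directly the defining property of $\overline{E}$: recall that $\overline{E}=\{x\in K\mid f(x)\in V\text{ for all }f\in\Int(E,V)\}$, so it suffices to show $f(\alpha)\in V$ for every $\alpha\in\inslim_E$ and every $f\in\Int(E,V)$. I would fix such an $\alpha$ and $f$, say $\deg f=d$, and first record that the $s_n$ are pairwise distinct, and more precisely that $v(s_n-s_m)=\delta_{\min(m,n)}$ for $m\neq n$: writing $s_n-s_m=\sum_{i=m}^{n-1}(s_{i+1}-s_i)$ for $m<n$, the summands have strictly increasing valuations $\delta_m<\delta_{m+1}<\cdots<\delta_{n-1}$, so $v(s_n-s_m)=\delta_m$. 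Hence Newton's forward-difference interpolation formula on the $d+1$ distinct nodes $s_1,\dots,s_{d+1}$ is an exact polynomial identity:
\begin{equation*}
f(X)=\sum_{j=0}^{d}a_j\prod_{i=1}^{j}(X-s_i),\qquad a_j:=f[s_1,\dots,s_{j+1}],
\end{equation*}
where $f[\,\cdot\,]$ denotes the divided difference, the empty product is $1$, and $a_0=f(s_1)\in V$.

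Evaluating at $\alpha$ and using that $\alpha$ is a pseudo-limit, so $v(\alpha-s_i)=\delta_i$, the $j$-th summand has valuation $v(a_j)+\sum_{i=1}^{j}\delta_i$; thus everything reduces to the estimate $v(a_j)\geq-\sum_{i=1}^{j}\delta_i$. I would obtain it from the slightly more general bound
\begin{equation*}
v\big(f[s_n,\dots,s_{n+j}]\big)\geq-\sum_{i=0}^{j-1}\delta_{n+i}\qquad(n\geq1,\ j\geq0),
\end{equation*}
proved by induction on $j$: the base case $j=0$ is $f(s_n)\in V$, and the inductive step follows from the recursion $f[s_n,\dots,s_{n+j}]=\big(f[s_{n+1},\dots,s_{n+j}]-f[s_n,\dots,s_{n+j-1}]\big)/(s_{n+j}-s_n)$ together with $v(s_{n+j}-s_n)=\delta_n$ and the fact that the gauge is increasing (so that, of the two lower bounds provided by the inductive hypothesis, the smaller one is $-\sum_{i=1}^{j-1}\delta_{n+i}$). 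Alternatively one can argue from the closed form $f[s_n,\dots,s_{n+j}]=\sum_{i=0}^{j}f(s_{n+i})\big/\prod_{l\neq i}(s_{n+i}-s_{n+l})$, whose $i$-th term has valuation $\geq-\big(\sum_{l<i}\delta_{n+l}+(j-i)\delta_{n+i}\big)$; this quantity is nondecreasing in $i$, hence maximal at $i=j$, where it equals $\sum_{l=0}^{j-1}\delta_{n+l}$.

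Combining the two estimates gives that every summand of $f(\alpha)=\sum_{j=0}^{d}a_j\prod_{i=1}^{j}(\alpha-s_i)$ has nonnegative valuation, so $f(\alpha)\in V$; as $f\in\Int(E,V)$ was arbitrary, $\alpha\in\overline{E}$, and therefore $\inslim_E\subseteq\overline{E}$. The main — in fact the only — obstacle is the divided-difference estimate of the second paragraph, i.e.\ keeping track of the combinatorics of the increasing gauge; everything else is formal manipulation. It is worth stressing that the argument uses nothing beyond the structure of $\Gamma_v$ as an ordered abelian group, so it is completely insensitive to the rank of $V$, which is precisely the point relative to \cite[Proposition 4.8]{chabert-polynomialclosure}.
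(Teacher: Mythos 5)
Your proof is correct, but it takes a genuinely different route from the paper's. The paper expands $f$ around the pseudo-limit, $f(X)=\sum_j a_j(X-\alpha)^j$, and invokes the dominance result from the proof of \cite[Proposition 3.7]{pseudoconv}: for large $n$ a single term $a_k(s_n-\alpha)^k$ realizes $v(f(s_n))$ and strictly dominates the others, whence $v(a_0)=v(f(\alpha))\geq v(f(s_n))\geq 0$. You instead expand $f$ in the Newton basis attached to the nodes $s_1,\dots,s_{d+1}$ and prove from scratch, by induction on the order of the divided differences, the valuation bound $v(f[s_n,\dots,s_{n+j}])\geq-\sum_{i=0}^{j-1}\delta_{n+i}$; both your inductive step (the smaller of the two lower bounds is indeed $-\sum_{i=1}^{j-1}\delta_{n+i}$ because the gauge is increasing) and your alternative closed-form estimate check out, and the evaluation at $\alpha$ then gives $v(a_j)+\sum_{i=1}^j\delta_i\geq 0$ termwise. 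What you have essentially done is reprove, in this special case, the fact that the polynomials $H_n(X)=\prod_{i<n}(X-s_i)/(s_n-s_i)$ form a regular basis of $\Int(E,V)$ --- a fact the paper cites from \cite[Proposition 20]{survey} immediately after this lemma and uses in Theorem \ref{teor:chiuspol-pcv}; had you invoked it, your argument would collapse to the one-line computation $v(H_n(\alpha))=\sum_{i<n}(\delta_i-\delta_i)=0$. The trade-off is clear: the paper's proof is shorter but leans on an external lemma about pseudo-convergent sequences, while yours is self-contained, purely about the ordered abelian group $\Gamma_v$, and makes the rank-independence (the point of the remark preceding the lemma) completely transparent.
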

\begin{proof}
Let $\alpha\in\inslim_E$, and let $f\in\Int(E,V)$; we can write it as $f(X)=\sum_ja_j(X-\alpha)^j$. By the proof of \cite[Proposition 3.7]{pseudoconv}, there is a $k$ such that, for all large $n$, $v(f(s_n))=v(a_k(s_n-\alpha)^k)<v(a_j(s_n-\alpha)^j)$ for all $j\neq k$. Since $v(f(s_n))\geq 0$ for all $n$, it follows that $v(f(\alpha))=v(a_0)\geq 0$. Hence $\alpha\in\overline{E}$.
\end{proof}

\begin{oss}\label{oss:indexed}
The previous lemma also shows why, in this context, it is enough to consider pseudo-convergent sequences indexed by $\insN$. Indeed, let $E:=\{s_\nu\}_{\nu\in\Lambda}$ be a pseudo-convergent sequences indexed by a well-ordered set $\Lambda$, and let $E_{\mathrm{in}}$ be the subsequence $\{s_n\}_{n\inN}$: then, $E_{\mathrm{in}}$ is again pseudo-convergent. Let $\nu\in\Lambda\setminus\insN$. Then, $s_\nu\in\inslim_{E_{\mathrm{in}}}\subseteq\overline{E_{\mathrm{in}}}$, and thus $\overline{E}=\overline{E_{\mathrm{in}}}$; hence, we do not lose anything by considering only $\overline{E_{\mathrm{in}}}$.
\end{oss}

For each $n\inN$, consider the polynomial
\begin{equation*}
H_n(X):=\prod_{i=0}^{n-1}\frac{X-s_i}{s_n-s_i}.
\end{equation*}
Note that for each $n$, $H_n(s_j)$ is zero for $j<n$ and is a unit of $V$ for $j\geq n$, as $v(s_j-s_i)=\delta_i=v(s_n-s_i)$ when $j\geq n> i$. In particular, these polynomials are integer-valued on $E$, and thus by \cite[Proposition 20]{survey} they form a \emph{regular basis} for $\Int(E,V)$, that is, a basis for the $V$-module $\Int(E,V)$ such that $\deg(H_n)=n$ for each $n\in\N$. In particular, an element $\alpha\in K$ is in $\overline{E}$ if and only if $H_n(\alpha)\in V$ for all $n\in\N$.
\begin{teor}\label{teor:chiuspol-pcv}
Let $E:=\{s_n\}_{n\inN}$ be a pseudo-convergent sequence with gauge $\{\delta_n\}_{n\inN}$; let $c_n:=s_{n+1}-s_n$. Then,
\begin{equation}\label{eq:ovE}
\overline{E}=\inslim_E\cup\bigcup_{n\geq 1}(s_n+c_nP_n),
\end{equation}
where $P_n$ is the largest prime ideal contained in $c_n^{-1}\Br(E)$. Furthermore, the union is disjoint.
\end{teor}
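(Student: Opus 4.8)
The plan is to work entirely with the regular basis $\{H_m\}$ recalled just before the statement: $\alpha\in\overline{E}$ if and only if $H_m(\alpha)\in V$ for every $m$, and since $v(s_m-s_i)=\delta_i$ for $i<m$ one has $v(H_m(\alpha))=\sum_{i=0}^{m-1}\bigl(v(\alpha-s_i)-\delta_i\bigr)$. The argument is governed by a preliminary computation of the valuations $v(\alpha-s_m)$ as $m$ varies, which follows from the ultrametric inequality together with Lemma \ref{not in open ball}: for a fixed $\alpha\in K$ exactly one of the following holds. (A1) $v(\alpha-s_m)=\delta_m$ for every $m$, i.e.\ $\alpha\in\inslim_E$. (A2) $v(\alpha-s_m)\le\delta_m$ for every $m$ and $v(\alpha-s_{n_0})<\delta_{n_0}$ for some index; taking $n_0$ least, $v(\alpha-s_m)=\delta_m$ for $m<n_0$ and $v(\alpha-s_m)=v(\alpha-s_{n_0})$ for all $m\ge n_0$. (B) there is a (necessarily unique, by Lemma \ref{not in open ball}) index $n$ with $v(\alpha-s_n)>\delta_n$; then $v(\alpha-s_m)=\delta_m$ for $m<n$ and $v(\alpha-s_m)=\delta_n$ for $m>n$.

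For the inclusion $\supseteq$ in \eqref{eq:ovE}: $\inslim_E\subseteq\overline{E}$ is Lemma \ref{lemma:inslim}. If $\alpha\in s_n+c_nP_n$ then Proposition \ref{prop:Pk} gives $v(\alpha-s_n)>\lambda(\delta_r-\delta_n)+\delta_n$ for all $r\ge n$ and $\lambda\inN$; in particular $\alpha$ is in case (B) with this $n$, so $v(H_m(\alpha))=0$ for $m\le n$ (all summands vanish), while for $m>n$ one gets $v(H_m(\alpha))=\bigl(v(\alpha-s_n)-\delta_n\bigr)-\sum_{i=n+1}^{m-1}(\delta_i-\delta_n)$, which is $\ge 0$ because, applying the above inequality with $r=m-1$ and $\lambda=m-1-n$, $v(\alpha-s_n)-\delta_n>\sum_{i=n+1}^{m-1}(\delta_{m-1}-\delta_n)\ge\sum_{i=n+1}^{m-1}(\delta_i-\delta_n)$. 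Hence $s_n+c_nP_n\subseteq\overline{E}$.

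For the inclusion $\subseteq$: let $\alpha\in\overline{E}$ and run through the trichotomy. Case (A1) gives $\alpha\in\inslim_E$. Case (A2) is impossible: there $v(H_m(\alpha))=\sum_{i=n_0}^{m-1}\bigl(v(\alpha-s_{n_0})-\delta_i\bigr)$ is a sum of strictly negative terms, hence $<0$ for every $m>n_0$, contradicting $H_m(\alpha)\in V$. In case (B) I claim $\alpha\in s_n+c_nP_n$; by Proposition \ref{prop:Pk} it suffices to check $v(\alpha-s_n)>\lambda(\delta_r-\delta_n)+\delta_n$ for all $r\ge n$, $\lambda\inN$. Were this violated for some $(r,\lambda)$ --- necessarily with $r>n$ and $\lambda\ge 1$, since otherwise the right side is $\le\delta_n<v(\alpha-s_n)$ --- then, choosing $m$ with $m-1\ge r+\lambda$, all summands $\delta_i-\delta_n$ with $i>n$ are positive and $\{r+1,\dots,r+\lambda\}\subseteq\{n+1,\dots,m-1\}$, so $\sum_{i=n+1}^{m-1}(\delta_i-\delta_n)\ge\sum_{i=r+1}^{r+\lambda}(\delta_i-\delta_n)\ge\lambda(\delta_{r+1}-\delta_n)>\lambda(\delta_r-\delta_n)\ge v(\alpha-s_n)-\delta_n$, whence $v(H_m(\alpha))=\bigl(v(\alpha-s_n)-\delta_n\bigr)-\sum_{i=n+1}^{m-1}(\delta_i-\delta_n)<0$, again a contradiction. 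This establishes \eqref{eq:ovE}, and disjointness then follows directly from the trichotomy: $\inslim_E$ consists of the $\alpha$ with $v(\alpha-s_m)=\delta_m$ for all $m$, each $s_n+c_nP_n$ consists of $\alpha$ in case (B) with exceptional index $n$ (so $v(\alpha-s_n)>\delta_n$, incompatible with $\inslim_E$), and for $n<m$ an element of $s_n+c_nP_n$ has $v(\alpha-s_m)=\delta_n<\delta_m$ whereas an element of $s_m+c_mP_m$ has $v(\alpha-s_m)>\delta_m$.

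The main obstacle is case (B) of the inclusion $\subseteq$: one has to extract the full growth estimate $v(\alpha-s_n)>\lambda(\delta_r-\delta_n)+\delta_n$ of Proposition \ref{prop:Pk} --- not just the weak consequence $v(\alpha-s_n)>\delta_n$ --- from the bare fact that $H_m(\alpha)\in V$ for all $m$. The recurring technical point, in both directions, is that $\Gamma_v$ need not be discrete, so every estimate must be carried out with finite sums of the $\delta_i$ and no passage to a limit in $\Gamma_v$ is available.
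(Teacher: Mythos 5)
Your proof is correct and follows essentially the same route as the paper's: both rely on the regular basis $\{H_n\}$, the valuation trichotomy for $v(\alpha-s_m)$ coming from Lemma \ref{not in open ball}, Lemma \ref{lemma:inslim}, and the translation of the growth estimate into membership in $s_n+c_nP_n$ via Proposition \ref{prop:Pk}. The only difference is organizational (you phrase the hard inclusion contrapositively, choosing $m\ge r+\lambda+1$, where the paper fixes $\lambda,m$ and lets the degree grow), so no further comment is needed.
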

\begin{proof}
Suppose $\alpha\in\overline{E}$.

\medskip

If $v(\alpha-s_n)=\delta_n$ for every $n$ then $\alpha\in\inslim_E$, and in particular it is contained in the right hand side of \eqref{eq:ovE}. Suppose that is not the case: we distinguish two possibilities.

Suppose that $v(\alpha-s_n)\leq\delta_n$ for every $n\in\N$ and that $k$ is the smallest index for which $v(\alpha-s_k)<\delta_k$; in particular,  $v(\alpha-s_i)=\delta_i$ for all $i<k$. We have
$$v(H_{k+1}(\alpha))=\sum_{i=0}^k v(\alpha-s_i)-\sum_{i=0}^k \delta_i=v(\alpha-s_k)-\delta_k<0$$
a contradiction with the fact that $\alpha\in\overline{E}$.

Suppose now that $v(\alpha-s_k)>\delta_k$ for some $k$; by Lemma \ref{not in open ball} this $k$ is unique, and for all the other indexes we have
\begin{equation}\label{1}
v(\alpha-s_i)=v(\alpha-s_k+s_k-s_i)=\left\{
\begin{array}{cc}
\delta_i,&\text{ if }i<k\\
\delta_k,&\text{ if }i>k
\end{array}
\right.
\end{equation}
In particular, $v(H_{k+1}(\alpha))=v(\alpha-s_k)-\delta_k>0$ and if $n>k+1$ by \eqref{1} we have
\begin{align}\label{2}
v(H_n(\alpha))&=\sum_{i=0}^{k-1}(\delta_i-\delta_i)+v(\alpha-s_k)-\delta_k+\sum_{i=k+1}^{n-1}(\delta_k-\delta_i)=\nonumber\\
&=v(\alpha-s_k)-\delta_k+\sum_{i=k+1}^{n-1}(\delta_k-\delta_i)
\end{align}
Let now $\lambda,m\in\N$, $m\geq k$, be fixed. Choose $n$ so that $n-m>\lambda$. In particular,  $\sum_{i=k+1}^{n-1}\delta_i>\lambda\delta_m$. Hence, by \eqref{2} and the fact that $H_n(\alpha)\in V$ we have
$$v(\alpha-s_k)-\delta_k\geq \sum_{i=k+1}^{n-1}(\delta_i-\delta_k)>\lambda(\delta_m-\delta_k)$$
Since $\lambda,m$ are arbitrary, by Proposition \ref{prop:Pk} it follows that $\alpha\in s_k+c_kP_k$, as we wanted to show.

\medskip

Let now $\alpha$ be in the right hand side of \eqref{eq:ovE}. If $\alpha\in\inslim_E$ then $\alpha\in\overline{E}$ by Lemma \ref{lemma:inslim}. Suppose that $\alpha\notin\inslim_E$ and $\alpha\in s_k+c_kP_k$ for some $k\geq 1$: then by Proposition \ref{prop:Pk}  $v(\alpha-s_k)>\lambda(\delta_r-\delta_k)+\delta_k$ for every $r\geq k$ and every $\lambda\inN$.

In order to show that $\alpha\in\overline{E}$, it is enough to prove that $H_n(\alpha)\in V$ for all $n\in\N$.

If $n\leq k$, then by \eqref{1} we have
$$v(H_n(\alpha))=\sum_{i=0}^{n-1}(\delta_i-\delta_i)=0.$$
For $n=k+1$ we have $v(H_{k+1}(\alpha))>0$ as we remarked above.

Suppose now that $n>k+1$. Then by \eqref{2} we have
\begin{align*}
v(H_n(\alpha))&>v(\alpha-s_k)-\delta_k+\sum_{i=k+1}^{n-1}(\delta_k-\delta_{n-1})=\\
&=v(\alpha-s_k)-\delta_k+(n-k-1)(\delta_k-\delta_{n-1})
\end{align*}
and the last quantity is greater than zero by assumption. Hence, $\alpha\in\overline{E}$.

\medskip

We conclude the proof of the theorem by showing the last claim. For every $t\in s_k+c_kP_k$, we have $t-s_k\in c_kP_k$ and in particular  $v(t-s_k)>v(c_k)=\delta_k$. In particular, no such $t$ can be a pseudo-limit of $E$ (since otherwise $v(t-s_k)=\delta_k$ for every $k$). Moreover, if $t\in(s_k+c_kP_k)\cap(s_{k'}+c_{k'}P_{k'})$ for some $k'>k$, then we should have at the same time $v(t-s_k)>\delta_k$ and $v(t-s_{k'})>\delta_{k'}$, in contradiction with Lemma \ref{not in open ball}. Hence the union is disjoint, as claimed.
\end{proof}

As a consequence of Theorem \ref{teor:chiuspol-pcv}, we have the main result of the paper.
\begin{teor}
Let $V$ be a valuation domain of rank $>1$. Then, the polynomial closure is not a topological closure.
\end{teor}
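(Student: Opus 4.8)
The plan is to deduce the statement from Theorem \ref{teor:chiuspol-pcv} by producing a single pseudo-convergent sequence $E=\{s_n\}_{n\in\N}\subseteq V$ on which the polynomial closure violates finite additivity. Recall that the closure operator of any topology satisfies $\overline{A\cup B}=\overline A\cup\overline B$; so if the polynomial closure were topological we would have, in particular, $\overline E=\overline{\{s_1\}}\cup\overline{E\setminus\{s_1\}}$ for every such $E$. Since $V$ is not a field, a singleton is polynomially closed: given $t\neq s_1$ and a nonzero $d\in M$, the polynomial $\tfrac{X-s_1}{d(t-s_1)}$ lies in $\Int(\{s_1\},V)$ but takes the value $d^{-1}\notin V$ at $t$, so $t\notin\overline{\{s_1\}}$ and thus $\overline{\{s_1\}}=\{s_1\}$. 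Hence it is enough to exhibit one $E$ with $\overline E\supsetneq\{s_1\}\cup\overline{E\setminus\{s_1\}}$.

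First I would pin down this discrepancy. Put $E':=E\setminus\{s_1\}=\{s_{n+1}\}_{n\in\N}$; it is again pseudo-convergent, and $\Br(E')=\Br(E)$ because the single gauge value $\delta_1$ is immaterial, so the largest prime of $c_{n+1}^{-1}\Br(E')$ is exactly the ideal $P_{n+1}$ of Theorem \ref{teor:chiuspol-pcv}; moreover $\inslim_{E'}=\inslim_E$, since $v(\alpha-s_n)=\delta_n$ for all $n\geq 2$ forces $v(\alpha-s_1)=v((\alpha-s_2)+c_1)=\delta_1$ as well. Applying Theorem \ref{teor:chiuspol-pcv} to $E$ and to $E'$ gives
\begin{equation*}
\overline E=\inslim_E\cup\bigcup_{n\geq 1}(s_n+c_nP_n),\qquad
\overline{E'}=\inslim_E\cup\bigcup_{n\geq 2}(s_n+c_nP_n),
\end{equation*}
and since the union in \eqref{eq:ovE} is disjoint this yields $\overline E\setminus\bigl(\{s_1\}\cup\overline{E'}\bigr)=(s_1+c_1P_1)\setminus\{s_1\}$. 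Therefore the identity $\overline E=\{s_1\}\cup\overline{E'}$ fails precisely when $P_1\neq(0)$, and the whole theorem reduces to constructing a pseudo-convergent sequence with $P_1\neq(0)$.

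This construction is where the hypothesis on the rank enters, and is the heart of the matter. Since the rank of $V$ is larger than $1$, there is a chain $(0)\subsetneq P\subsetneq M$ of prime ideals of $V$; the prime $P$ corresponds to a proper nonzero convex subgroup $\Delta$ of $\Gamma_v$, so that $v(p)>\gamma$ for every nonzero $p\in P$ and every $\gamma\in\Delta$. Fix $g\in\Delta$ with $g>0$, choose for each $n\geq1$ an element $c_n\in V$ with $v(c_n)=(n-1)g$, and set $s_1:=0$ and $s_{n+1}:=s_n+c_n$. Then $E:=\{s_n\}_{n\in\N}$ is a pseudo-convergent sequence whose gauge $\delta_n=(n-1)g$ is strictly increasing. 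To check $P_1\neq(0)$, fix a nonzero $p\in P$: for any $t\in V\setminus c_1^{-1}\Br(E)$ we have $c_1t\notin\Br(E)$, hence $v(c_1t)\leq\delta_n$ for some $n$, so $v(t)\leq\delta_n-\delta_1=(n-1)g$, whence $k\,v(t)\leq k(n-1)g\in\Delta$ and therefore $v(p)>k\,v(t)$ for every $k\in\N$; by Lemma \ref{greatest prime ideal in an ideal} and the remark following it, $p\in P_1$. Thus $(0)\neq P\subseteq P_1$, so $(s_1+c_1P_1)\setminus\{s_1\}$ is nonempty; picking any nonzero $p\in P$, the element $s_1+c_1p$ lies in $\overline E$ but not in $\{s_1\}\cup\overline{E\setminus\{s_1\}}$, contradicting finite additivity. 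Hence the polynomial closure is not a topological closure.

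I expect the construction in the last paragraph to be the main obstacle: one must build a pseudo-convergent sequence whose gauge is strictly increasing yet generates only a \emph{proper} convex subgroup of $\Gamma_v$, which is possible exactly when the rank is at least $2$. The key idea is to let the gauge run through the nonnegative integer multiples of a single positive element of a proper nonzero convex subgroup, so that no hypothesis on the cofinality or further structure of that subgroup is needed. The rest --- the reduction via finite additivity, the comparison of $E$ with $E'$, and the identification of the defect as $(s_1+c_1P_1)\setminus\{s_1\}$ --- is a routine consequence of Theorem \ref{teor:chiuspol-pcv}.
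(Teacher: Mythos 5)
Your proposal is correct and follows essentially the same strategy as the paper: both deduce from Theorem \ref{teor:chiuspol-pcv} (and the disjointness of the union in \eqref{eq:ovE}) that removing the first term of a pseudo-convergent sequence with $P_1\neq(0)$ breaks finite additivity, and both use the rank hypothesis to produce such a sequence from a nonzero nonmaximal prime. The only cosmetic difference is the explicit witness --- the paper takes $E=\{t^n\}_{n\inN}$ for a nonunit $t$ outside a nonzero nonmaximal prime (so that $\Br(E)=P_n=P$ directly), while you build the sequence from a positive element $g$ of the corresponding proper convex subgroup with gauge $(n-1)g$; the two constructions are interchangeable.
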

\begin{proof}
Since $V$ has rank bigger than $1$, there is a nonmaximal prime ideal $P'$; if $t\in V\setminus P'$ is a nonunit, then the largest prime ideal $P$ (strictly) contained in $tV$ is different from the zero ideal. Let $E:=\{t^n\}_{n\inN}$ and let $E':=\{t^n\}_{n\geq 2}$. Then, $E$ and $E'$ are pseudo-convergent sequences with breadth ideal $\Br(E)=\Br(E')=P$ and with $\inslim_E=\inslim_{E'}$. Moreover, for every $n$, we have $(t^{n+1}-t^n)^{-1}P=P$.

By Theorem \ref{teor:chiuspol-pcv}, it follows that
\begin{equation*}
\overline{E}=\inslim_E\cup\bigcup_{k\geq 1}(t^k+P)=\overline{E'}\cup(t+P);
\end{equation*}
moreover, the first union is disjoint, and so $(t+P)\cap\overline{E'}=\emptyset$. If the polynomial closure were topological, we would have $\overline{E}=\overline{E'\cup\{t\}}=\overline{E'}\cup\overline{\{t\}}=\overline{E}\cup\{t\}$ (since finite sets are polynomially closed \cite[Chapter IV, Example IV.1.3]{CaCh}); however, for every $p\in P\setminus\{0\}$, the element $t+p$ is in $\overline{E}$ but not in $\overline{E'}\cup\overline{\{t\}}$. Thus, the polynomial closure is not topological, as claimed.
\end{proof}

To conclude the paper, we show when two pseudo-convergent sequences have the same polynomial closure.
\begin{prop}\label{prop:IntEIntF}
Let $E:=\{s_n\}_{n\inN}$ and $F:=\{t_n\}_{n\inN}$ be two pseudo-convergent sequences with gauges $\bdelta(E):=\{\delta_n\}_{n\inN}$ and $\bdelta(F):=\{\eta_n\}_{n\inN}$, respectively. Then, $\overline{E}=\overline{F}$ if and only if $\delta_t=\eta_t$ for every $t$ and $v(t_k-s_k)>\lambda(\delta_r-\delta_k)+\delta_k$ for every $k\geq r$ and every $\lambda\inZ$.
\end{prop}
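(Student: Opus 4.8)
I would prove both implications by reading off $\overline E$ and $\overline F$ from Theorem~\ref{teor:chiuspol-pcv}, after observing that, by Proposition~\ref{prop:Pk} applied with $\alpha=t_k$, the inequality appearing in the statement is nothing but the membership $t_k\in s_k+c_kP_k$ for every $k$; note also that, once $\delta_r=\eta_r$ for all $r$ is known, this condition is symmetric in $E$ and $F$ (because $v(t_k-s_k)=v(s_k-t_k)$), hence equivalent to $s_k\in t_k+d_kQ_k$ for every $k$, where $d_k:=t_{k+1}-t_k$ and $Q_k$ is the largest prime contained in $d_k^{-1}\Br(F)$.

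For the ``if'' direction I would use that the polynomial closure is an extensive, monotone and idempotent closure operator (all immediate from its definition), so that $\overline E=\overline F$ follows once $E\subseteq\overline F$ and $F\subseteq\overline E$. Assuming the hypothesis, Proposition~\ref{prop:Pk} yields $t_k\in s_k+c_kP_k$ for every $k$, and Theorem~\ref{teor:chiuspol-pcv} gives $s_k+c_kP_k\subseteq\overline E$; hence $F\subseteq\overline E$, and symmetrically $E\subseteq\overline F$.

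For the ``only if'' direction, assume $\overline E=\overline F$, so that $\Int(E,V)=\Int(\overline E,V)=\Int(\overline F,V)=\Int(F,V)$. I would first deduce $\delta_n=\eta_n$ for all $n$ by comparing the fractional ideal of leading coefficients of the degree-$n$ elements of this common ring: computed from the regular basis $\{H_n^E\}$ it equals $\bigl(\prod_{i<n}(s_n-s_i)\bigr)^{-1}V$, of value $-\sum_{i<n}\delta_i$, and computed from $\{H_n^F\}$ it has value $-\sum_{i<n}\eta_i$; so $\sum_{i<n}\delta_i=\sum_{i<n}\eta_i$ for all $n$, whence $\delta_n=\eta_n$. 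It follows that $\Br(E)=\Br(F)=:\mathfrak B$, that $P_n=Q_n$, that $c_nP_n=d_nQ_n$ (since $v(c_n)=\delta_n=\eta_n=v(d_n)$, so $c_n/d_n$ is a unit of $V$), and that $c_nP_n\subseteq\mathfrak B$ for every $n$. Then Theorem~\ref{teor:chiuspol-pcv} exhibits two disjoint decompositions of the \emph{same} set, $\overline E=\inslim_E\sqcup\bigsqcup_m(s_m+c_mP_m)$ and $\overline F=\inslim_F\sqcup\bigsqcup_m(t_m+c_mP_m)$, and the heart of the proof is to check that these two partitions coincide. For this I would argue that translating a point of $\overline F$ by an element of $\mathfrak B$ changes its valuation profile against the $t_j$ only, possibly, at the location of its ``jump'', which however stays put (every element of $\mathfrak B$ has value exceeding each $\eta_j=\delta_j$); since any two points of a single piece $s_m+c_mP_m$ differ by an element of $c_mP_m\subseteq\mathfrak B$, they lie in the same piece of the $F$-decomposition, and symmetrically, and the same holds for $\inslim_E$ and $\inslim_F$. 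This produces mutually inverse bijections between the two index sets; one checks (again by a valuation computation, else an index would be attained twice) that no finite index is matched with $\inslim$, and then disjointness forces $s_m+c_mP_m$ to equal, as a set, the matching piece $t_{\sigma(m)}+c_{\sigma(m)}P_{\sigma(m)}$, so that $t_{\sigma(m)}-s_m\in c_{\sigma(m)}P_{\sigma(m)}\subseteq\mathfrak B$. Finally, expanding $t_{\sigma(m)}-t_{\sigma(m')}=(t_{\sigma(m)}-s_m)-(t_{\sigma(m')}-s_{m'})+(s_m-s_{m'})$ for $m\neq m'$ and using that the first two summands have value exceeding every $\delta_j$ while $v(s_m-s_{m'})=\delta_{\min(m,m')}$ and $v(t_{\sigma(m)}-t_{\sigma(m')})=\eta_{\min(\sigma(m),\sigma(m'))}=\delta_{\min(\sigma(m),\sigma(m'))}$, I get $\min(\sigma(m),\sigma(m'))=\min(m,m')$ for all $m\neq m'$; since the identity is the only bijection with this property (an easy induction), $\sigma=\mathrm{id}$, so $s_m+c_mP_m=t_m+c_mP_m$, i.e.\ $t_m\in s_m+c_mP_m$ for every $m$, which by Proposition~\ref{prop:Pk} is the asserted inequality.

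The step I expect to be the main obstacle is the coincidence of the two partitions of $\overline E=\overline F$: one must exploit the rigidity of the decomposition provided by Theorem~\ref{teor:chiuspol-pcv}, the crucial point being that the common breadth ideal $\mathfrak B$ contains every ideal $c_mP_m$, so that translating within a piece cannot move a point out of it. Once the pieces are matched, identifying the matching bijection with the identity is the short combinatorial argument indicated above.
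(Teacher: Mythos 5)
Your proof is correct, and it takes a genuinely different route from the paper's at each of the three stages. For the ``if'' direction, the paper shows that the two decompositions of Theorem \ref{teor:chiuspol-pcv} literally coincide: $s_k+c_kP_k=t_k+d_kQ_k$ by Proposition \ref{prop:Pk}, and $\inslim_E=\inslim_F$ because the sequences are equivalent (citing \cite[Lemma 5.3]{pseudoconv}); your closure-operator argument ($F\subseteq\overline{E}$ and $E\subseteq\overline{F}$ imply $\overline{E}=\overline{F}$) is lighter and avoids saying anything about pseudo-limits. To recover $\bdelta(E)=\bdelta(F)$, the paper studies the distance set $D(E)=\{v(x-y)\mid x,y\in\overline{E}\}$ and identifies the gauge as the complement of its largest up-closed subset, while you compare the leading-coefficient ideals of the common ring $\Int(E,V)=\Int(F,V)$ in each degree, read off the two regular bases; both are valid, and yours is a short, purely ideal-theoretic computation. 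For the last step ($t_k\in s_k+c_kP_k$), the paper uses the pointed invariant $D(E,y)=\{v(x-y)\mid x\in\overline{E}\}$, whose non--up-closed part is a finite initial segment $\{\delta_0,\dots,\delta_k\}$ of the gauge precisely when $y\in s_k+c_kP_k$, and is all of $\bdelta(E)$ when $y\in\inslim_E$; this pins down the matching index at once. Your alternative --- pieces are stable under translation by $\Br(E)$, hence the two partitions are matched by mutually inverse injections; no $s_m+c_mP_m$ can land in $\inslim_F$ (indeed that would force $\sigma(m+1)=\sigma(m+2)=m$, the ``index attained twice'' you allude to); and $\min(\sigma(m),\sigma(m'))=\min(m,m')$ forces $\sigma=\mathrm{id}$ --- does go through, as I checked, but is noticeably longer than the paper's one-line evaluation of $D(F,s_k)$.
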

\begin{proof}
By Theorem \ref{teor:chiuspol-pcv}, we can write
\begin{equation*}
\overline{E}=\inslim_E\cup\bigcup_{k\inN}(s_k+c_kP_k)
\end{equation*}
and
\begin{equation*}
\overline{F}=\inslim_F\cup\bigcup_{k\inN}(t_k+d_kQ_k)
\end{equation*}
for some $c_k,d_k\in K$ and prime ideals $P_k,Q_k$ defined as in the theorem. Let $S_k:=s_k+c_kP_k$ and $T_k:=t_k+d_kQ_k$.

Suppose that the two conditions of the statement hold. Then, by Proposition \ref{prop:Pk}, for every $k$ we have $v(c_k)=v(d_k)$, $P_k=Q_k$ and $s_k-t_k\in c_kP_k$, so that $S_k=T_k$. Furthermore, $s_k-t_k\in\Br(E)$ for every $k$, and thus $E$ and $F$ are equivalent in the sense of \cite[Section 5]{pseudoconv}, so $\inslim_E=\inslim_F$ by \cite[Lemma 5.3]{pseudoconv} and $\overline{E}=\overline{F}$.

Conversely, suppose $\overline{E}=\overline{F}$. Let $x,y\in\overline{E}$: then
\begin{itemize}
\item if $x,y\in S_k$ then $x-y\in c_kP_k\subseteq\Br(E)$;
\item if $x\in S_k$ and $y\in S_j$ for $k<j$ then $v(x-y)=\delta_k$;
\item if $x\in S_k$ and $y\in\inslim_E$ then $v(x-y)=\delta_k$;
\item if $x,y\in\inslim_E$ then $x-y\in\Br(E)$.
\end{itemize}
Let $D(E):=\{v(x-y)\mid x,y\in\overline{E}\}$: then, $D(E)=\bdelta(E)\cup X_E$, where $X_E$ is an up-closed subset of $\Gamma_v\cup\{\infty\}$ (more precisely, $X=v(\Br(E))$ if $\inslim_E$ has at least two elements, while $X=\bigcup_iv(c_iP_i)$ otherwise). Analogously, $D(F)=\bdelta(F)\cup X_F$.

If $\overline{E}=\overline{F}$, then $D(E)=D(F)$. Since $X_E$ is the largest up-closed subset of $D(E)$ (and analogously for $D(F)$), we must have $\bdelta(E)=\bdelta(F)$; since the gauges are linearly ordered, it must be $\delta_n=\eta_n$ for every $n\inN$. In particular, $\Br(E)=\Br(F)$ and $v(c_k)=v(d_k)$ for every $k$; thus, $P_k=Q_k$ for every $k$.

Therefore, to prove the statement we only need to show that $s_k\in T_k$ for every $k$. For $y\in\overline{E}$, let $D(E,y):=\{v(x-y)\mid x\in\overline{E}\}$: then, with the same reasoning as above, we see that
\begin{equation*}
D(E,y)=\begin{cases}
\{\delta_1,\ldots,\delta_n\}\cup X_{E,y} & \text{if~}y\in S_k\\
D(E) & \text{if~}y\in\inslim_E,
\end{cases}
\end{equation*}
where $X_{E,y}$ is an up-closed subset of $\Gamma_v\cup\{\infty\}$. Clearly, $D(E,y)=D(F,y)$; in particular, $D(F,s_k)=\{\delta_1,\ldots,\delta_k\}\cup X_{E,s_k}$, and thus it must be $s_k\in T_k$. The claim is proved.
\end{proof}

When $V$ has rank 1, the two previous propositions have a very simplified form, which can also be obtained from Chabert's paper \cite{chabert-polynomialclosure}.
\begin{cor}\label{cor:chiuspol-rk1}
Let $V$ be a valuation ring of rank $1$, and let $E:=\{s_n\}_{n\inN}$ and $F:=\{t_n\}_{n\inN}$ be two pseudo-convergent sequences. Then:
\begin{enumerate}[(a)]
    \item\label{cor:chiuspol-rk1:chiuspol} $\overline{E}=E\cup\inslim_E$;
    \item\label{cor:chiuspol-rk1:EF} $\Int(E,V)=\Int(F,V)$ if and only if $s_n=t_n$ for every $n\inN$.
\end{enumerate}
\end{cor}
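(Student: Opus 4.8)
The plan is to obtain both statements by specializing Theorem~\ref{teor:chiuspol-pcv} and Proposition~\ref{prop:IntEIntF} to the rank-$1$ case, in which the only prime ideals of $V$ are $(0)$ and $M$.

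For part (a), I would begin from the description $\overline{E}=\inslim_E\cup\bigcup_{n\geq 1}(s_n+c_nP_n)$ supplied by Theorem~\ref{teor:chiuspol-pcv}, where $c_n=s_{n+1}-s_n$ and $P_n$ is the largest prime ideal contained in $c_n^{-1}\Br(E)$, and then show that every $P_n$ is the zero ideal. Since $P_n\subseteq M$ and $V$ has rank $1$, the only alternative is $P_n=M$, which would force $c_nM\subseteq\Br(E)$. But $c_{n+1}/c_n=(s_{n+2}-s_{n+1})/(s_{n+1}-s_n)$ has value $\delta_{n+1}-\delta_n>0$, so $c_{n+1}\in c_nM$, while $v(c_{n+1})=\delta_{n+1}$ shows that $c_{n+1}\notin\Br(E)$, a contradiction. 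Hence $P_n=(0)$, so $s_n+c_nP_n=\{s_n\}$, and $\overline{E}=\inslim_E\cup E$.

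For part (b), I would first record that $\Int(E,V)=\Int(F,V)$ if and only if $\overline{E}=\overline{F}$, which is immediate from the definition of polynomial closure: since $\Int(\overline{E}\cup\overline{F},V)=\Int(\overline{E},V)\cap\Int(\overline{F},V)$, an equality $\Int(E,V)=\Int(F,V)$ makes this intersection equal to $\Int(E,V)$, and the maximality property of $\overline{E}$ and of $\overline{F}$ then gives $\overline{E}\cup\overline{F}\subseteq\overline{E}$ and $\overline{E}\cup\overline{F}\subseteq\overline{F}$, hence $\overline{E}=\overline{F}$; the converse is clear. The implication ``$s_n=t_n$ for all $n$'' $\Rightarrow$ ``$\Int(E,V)=\Int(F,V)$'' is then trivial, since in that case $E=F$. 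For the converse I would invoke Proposition~\ref{prop:IntEIntF} together with Proposition~\ref{prop:Pk}: $\overline{E}=\overline{F}$ forces $\delta_n=\eta_n$ for all $n$ and $t_k\in s_k+c_kP_k$ for all $k$; since $P_k=(0)$ by part (a), the latter says exactly $s_k=t_k$ for every $k$.

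The only step with real content — and the only place where the rank-$1$ hypothesis genuinely intervenes — is the vanishing of the prime ideals $P_n$ in part (a); from there everything is formal. As an aside, part (b) can also be proved without Proposition~\ref{prop:IntEIntF}: by part (a) and the identity $v(s_i-s_j)=\delta_{\min(i,j)}$, an element $x\in\overline{E}$ belongs to $E$ exactly when $\{v(x-y)\mid y\in\overline{E},\ y\neq x\}$ is finite, and in that case $x=s_m$ with $m$ the cardinality of this set; as the gauge is strictly increasing this recovers the whole sequence $E$ from the set $\overline{E}$, so $\overline{E}=\overline{F}$ yields $s_m=t_m$ for every $m$. (One should just check this dichotomy uniformly, whether $\inslim_E$ is empty, a singleton or infinite — which holds since from any pseudo-limit one already sees the full, strictly increasing, hence infinite gauge.)
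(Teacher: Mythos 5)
Your proof is correct and follows the paper's own route: both parts reduce to showing $P_k=(0)$ in rank $1$ (your observation that $c_{k+1}\in c_kM\setminus\Br(E)$ is precisely why the paper can assert that $c_k^{-1}\Br(E)$ is a non-maximal ideal), and part (b) is then read off from Proposition \ref{prop:IntEIntF} via $t_k-s_k\in c_kP_k=(0)$. Your aside recovering the sequence $E$ intrinsically from the set $\overline{E}$ is a valid alternative for (b), but the main line of your argument coincides with the paper's.
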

\begin{proof}
Each $c_k^{-1}\Br(E)$ is a proper, non-maximal ideal of $V$; therefore, if $V$ has rank $1$ then we must have $P_k=(0)$. Hence, \ref{cor:chiuspol-rk1:chiuspol} follows from Theorem \ref{teor:chiuspol-pcv}, while \ref{cor:chiuspol-rk1:EF} from Proposition \ref{prop:IntEIntF}, since we must have $s_k-t_k\in P_k=(0)$.
\end{proof}

\subsection*{Acknowledgments}

The authors wish to warmly thank the referee for suggesting a different strategy for the proof of the main Theorem \ref{teor:chiuspol-pcv}, which greatly improved its clarity.

\end{document}